\newtheorem{definition}{Definition}[section]
\newtheorem{theorem}[definition]{Theorem}
\newtheorem{lemma}[definition]{Lemma}
\newtheorem{cor}[definition]{Corollary}
\newtheorem{example}[definition]{Example}
\newtheorem{remark}[definition]{Remark}
\newcommand{\M}{\mathrm{Mult}(H_k)}
\newcommand{\C}{\mathbb{C}}
\newcommand{\D}{\mathbb{D}}
\DeclareRobustCommand{\erase}{\bgroup\markoverwith{\textcolor{red}{\rule[.5ex]{2pt}{0.4pt}}}\ULon}
\numberwithin{equation}{section}
\begin{document}

\title[An RKHS approach to the indefinite Schwartz--Pick inequality]{An RKHS approach to the indefinite Schwarz--Pick inequality on the bidisk}
\author{Kenta Kojin}
\address{
Graduate School of Mathematics, Nagoya University, 
Furocho, Chikusaku, Nagoya, 464-8602, Japan
}
\email{m20016y@math.nagoya-u.ac.jp}
\date{\today}


\begin{abstract}
In this short note, we will give a generalization of the indefinite Schwarz--Pick inequality due to Seto \cite{Set}. Our approach is based on a connection between complex geometry and the geometry of reproducing kernel Hilbert spaces, which was crucially used in our previous work \cite{Koj}.
\end{abstract}

\maketitle

\section{Introduction}

Let $\D$ denote the open unit disk in the complex plane $\C$. 
For any two points $(z_1,z_2)$ and $(w_1,w_2)$ in $\D^2$, 
we define
\begin{equation*}
\rho((z_1, z_2), (w_1, w_2))=\sqrt{\left|\dfrac{z_1-w_1}{1-\overline{w_1}z_1}\right|^2+\left|\dfrac{z_2-w_2}{1-\overline{w_2}z_2}\right|^2
-\left|\dfrac{z_1-w_1}{1-\overline{w_1}z_1}\cdot \dfrac{z_2-w_2}{1-\overline{w_2}z_2}\right|^2}. 
\end{equation*}
This is a distance on $\D^2$ invariant under the automorphism group of $\D^2$. Seto \cite[Theorem 4.1]{Set} proved the following Schwarz--Pick type inequality for $\rho$:

\begin{theorem}\label{theorem:Seto}
    If $F:\D^2\rightarrow\D^2$ is a holomorphic map on $\D^2$, then $F$ satisfies
\begin{equation*}
\rho(F(z_1, z_2), F(w_1, w_2))\le \sqrt{2}\rho((z_1, z_2), (w_1, w_2))
\end{equation*}
for all $(z_1, z_2), (w_1, w_2)\in\D^2$.
\end{theorem}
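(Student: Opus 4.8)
The plan is to recognize $\rho$ as the natural metric attached to the Hardy space of the bidisk and then to reduce the statement to a one-variable Schwarz--Pick estimate together with an elementary scalar inequality that produces the constant $\sqrt2$. The starting point is the algebraic identity
\begin{equation*}
1-\rho\big((z_1,z_2),(w_1,w_2)\big)^2=\big(1-p(z_1,w_1)^2\big)\big(1-p(z_2,w_2)^2\big),\qquad p(a,b)=\left|\frac{a-b}{1-\overline{b}a}\right|,
\end{equation*}
together with the observation that each factor $1-p(z_j,w_j)^2$ equals $|s(z_j,w_j)|^2/\big(s(z_j,z_j)s(w_j,w_j)\big)$ for the Szeg\H{o} kernel $s(z,w)=(1-\overline{w}z)^{-1}$. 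Hence, writing $S\big((z_1,z_2),(w_1,w_2)\big)=s(z_1,w_1)s(z_2,w_2)$ for the reproducing kernel of $H^2(\D^2)$ and $\widehat S_{z}=S(\cdot,z)/\sqrt{S(z,z)}$ for the normalized kernel, one has $\rho(z,w)^2=1-|\langle\widehat S_{z},\widehat S_{w}\rangle|^2$. This exhibits $\rho$ as the gap (angle) metric between the one-dimensional subspaces $\C\,S(\cdot,z)$ and $\C\,S(\cdot,w)$, which is exactly the RKHS-geometric object on which the approach of \cite{Koj} operates.

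Writing $F=(F_1,F_2)$ with each $F_j\colon\D^2\to\D$ holomorphic, I would next establish the one-variable-target contraction
\begin{equation*}
p\big(F_j(z),F_j(w)\big)\le m:=\max\big(p(z_1,w_1),p(z_2,w_2)\big)\qquad(j=1,2).
\end{equation*}
In RKHS terms this is the statement that $F_j$ belongs to the Schur class of $\D^2$; concretely one may obtain it from Agler's realization of that class, or more elementarily by transporting $w$ to the origin with an automorphism of $\D^2$ to a point $a$ with $\max(|a_1|,|a_2|)=m$, normalizing the target so that the image of $w$ becomes $0$, and applying the one-variable Schwarz lemma to the analytic disk $t\mapsto F_j(ta/m)$, which stays in $\D^2$ for $t\in\D$ and equals $F_j(a)$ at $t=m$.

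Finally I would combine the two ingredients. Applying the identity at the image points and the contraction componentwise gives
\begin{equation*}
\rho\big(F(z),F(w)\big)^2=1-\prod_{j=1}^{2}\big(1-p(F_j(z),F_j(w))^2\big)\le 1-(1-m^2)^2,
\end{equation*}
so it remains to check the scalar inequality $1-(1-m^2)^2\le 2\big(1-(1-p_1^2)(1-p_2^2)\big)$ with $p_j=p(z_j,w_j)$ and $m=\max(p_1,p_2)$. Assuming $m=p_1$ and setting $u=1-m^2$, $v=1-p_2^2\le 1$, this reads $2uv\le 1+u^2$, which follows at once from $2uv\le 2u\le 1+u^2$. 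Taking square roots yields $\rho(F(z),F(w))\le\sqrt2\,\rho(z,w)$, and the choice $F=(\pi_1,\pi_1)$ with the first coordinate projection $\pi_1$ shows that the constant $\sqrt2$ is sharp as the two points collapse.

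The main obstacle is the middle step: isolating the componentwise Schwarz--Pick estimate in a genuinely RKHS-intrinsic form, since it is this packaging---rather than the classical Carath\'eodory distance formula for the polydisk---that makes the argument portable to the more general (indefinite) setting of the note. By contrast, the concluding scalar inequality, which is what actually delivers the dimensional constant $\sqrt2$, is entirely elementary.
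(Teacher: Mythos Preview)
Your proof is correct and follows the same three–step skeleton as the paper: identify $\rho$ with the tensor–product kernel distance $d_{(k^S)^{\otimes 2}}$ via $1-\rho^2=(1-p_1^2)(1-p_2^2)$; establish the componentwise contraction $p(F_j(z),F_j(w))\le\max\{p(z_1,w_1),p(z_2,w_2)\}$; and close with the scalar inequality that produces $\sqrt2$ (your $2uv\le 2u\le 1+u^2$ is exactly Lemma~\ref{lemma:SP}).

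The one genuine difference is the middle step. The paper obtains the componentwise bound abstractly: it uses the product formula for the M\"obius/Carath\'eodory distance (Lemma~\ref{lemma:3}), its decrease under holomorphic maps (Lemma~\ref{lemma:1}), and then the identification $d_X=d_k$ coming from the two–point Nevanlinna--Pick property (Lemma~\ref{lemma:2}). You instead prove the same bound by hand for $\D^2$ via an automorphism normalization and the one–variable Schwarz lemma along the disk $t\mapsto ta/m$. Your route is shorter and entirely elementary for the bidisk; the paper's route is what makes the argument transplant verbatim to any $H_k$ satisfying the hypotheses of Theorem~\ref{theorem:1} (and, via Lemma~\ref{lemma:power}, to the powers $k^n$). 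You already diagnosed this trade–off correctly in your last paragraph.
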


He used the theory of Hilbert modules in the Hardy space over the bidisk to prove this result. 
In this note, we will give a generalization of Theorem \ref{theorem:Seto} as an application of the geometry of reproducing kernel Hilbert spaces (RKHS for short) with ``the two-point Nevanlinna--Pick property" (see Definition \ref{def:two-point Pick}). This property means that an RKHS enjoys a two point analog of the celebrated Nevanlinna--Pick interpolation theorem. 
It is well known that the original Schwarz--Pick inequality is equivalent to the two-point Nevanlinna--Pick interpolation theorem \cite[Lemma 0.3]{AMp}. 
Therefore, our approach also seems to be natural.

We will prove our main theorem in Section 3 (see Theorem \ref{theorem:1}). In section 2, we will recall a relation between a distance derived from complex geometry and a distance derived from RKHS theory, which played an important role in \cite{Koj}.






\section{Preliminaries}
Let $H_k$ be an RKHS of holomorphic functions on a domain $X\subset\C^d$  with reproducing kernel $k:X\times X\rightarrow\C$. We will always assume that 
$H_k$ is irreducible in the following sense: For any two points $x,y\in X$, the reproducing kernel $k$ satisfies $k(x,y)\ne 0$, and if $x\ne y$, then $k(\cdot, x)$ and $k(\cdot, y)$ are linearly independent in $H_k$. A function $\phi:X\rightarrow\C$ is called a multiplier of $H_k$ if $\phi f\in H_k$ for all $f\in H_k$. In this case, the multiplication operator $M_{\phi}:H_k\rightarrow H_k$ defined by $M_{\phi}f:=\phi f$ is bounded by the closed graph theorem, and the multiplier norm $\|\phi\|_{\M}$ is defined to be the operator norm of $M_{\phi}$. The multiplier algebra of $H_k$, denoted by $\M$, is the algebra consisting of multipliers of $H_k$. 
Since $k(x,y)\ne 0$ for all $x,y\in X$, every $\phi\in\M$ falls in $H^{\infty}(X)$ and satisfies $\|\phi\|_{\M}\ge\|\phi\|_{\infty}:=\sup\{|\phi(x)|\;|\;x\in X\}$. Here, $H^{\infty}(X)$ is the algebra of all bounded holomorphic functions on $X$.

For an RKHS $H_k$ of holomorphic functions on a domain $X\subset\C^d$, we define a pseudo-distance on $X$ by

\begin{equation*}
    d_k(x,y):=\sqrt{1-\frac{|k(x,y)|^2}{k(x,x)k(y,y)}}\;\;\;\;(x,y\in X).
\end{equation*}

It is well known, see e.g., \cite[Theorem 6.22]{ARSW2019}, that every contractive multiplier $\phi\in\M$ 
satisfies
\begin{equation*}
d_{\D}(\phi(x),\phi(y))\le d_k(x,y)\;\;\;\;(x,y\in X).
\end{equation*}
Here, $d_{\D}$ is the pseudo-hyperbolic distance 
\begin{equation*}
    d_{\D}(z,w):=\left|\frac{z-w}{1-\overline{w}z}\right|\;\;\;\;(z,w\in\D).
\end{equation*}

We define another pseudo-distance on $X$,
called the M$\ddot{\mathrm{o}}$bius distance, 
by
\begin{equation*}
d_X(x,y):=\sup_{\phi\in H(X,\D)}d_{\D}(\phi(x), \phi(y))\;\;\;\;(x,y\in X).
\end{equation*}
Here, $H(X, \D)$ is the set of all holomorphic functions from $X$ into $\D$.
We can prove the following Schwarz--Pick type inequality by the definition of the M$\ddot{\mathrm{o}}$bius distance:
\begin{lemma}\label{lemma:1}
Let $X\subset\C^d$ and $Y\subset\C^e$ be domains. If $F:X\rightarrow Y$ is a holomorphic map, then we have
\begin{equation*}
d_Y(F(x),F(y))\le d_X(x,y)
\end{equation*}
for all $x,y\in X$.
\end{lemma}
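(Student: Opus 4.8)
The plan is to exploit the definition of the M\"{o}bius distance as a supremum, together with the elementary fact that precomposition by a holomorphic map preserves the class of $\D$-valued holomorphic functions. The entire argument reduces to a single composition observation, so I expect no serious obstacle; the content is essentially that $d_X$ is built to be the largest pseudo-distance contracted by every element of $H(X,\D)$, and contractivity is inherited through composition.

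First I would fix $x,y\in X$ and take an arbitrary test function $\psi\in H(Y,\D)$ appearing in the supremum that computes $d_Y(F(x),F(y))$. Since $F:X\to Y$ is holomorphic and $\psi:Y\to\D$ is holomorphic, the composite $\psi\circ F:X\to\D$ is again holomorphic, hence $\psi\circ F\in H(X,\D)$. Thus $\psi\circ F$ is an admissible competitor in the supremum defining $d_X(x,y)$.

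Consequently, evaluating $d_{\D}$ at the image points and rewriting via the composition, I obtain
\begin{equation*}
d_{\D}\bigl(\psi(F(x)),\psi(F(y))\bigr)=d_{\D}\bigl((\psi\circ F)(x),(\psi\circ F)(y)\bigr)\le d_X(x,y),
\end{equation*}
where the inequality is merely the definition of $d_X(x,y)$ as the supremum over all members of $H(X,\D)$, one of which is $\psi\circ F$. The right-hand side does not depend on $\psi$.

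Finally I would take the supremum of the left-hand side over all $\psi\in H(Y,\D)$, which gives
\begin{equation*}
d_Y(F(x),F(y))=\sup_{\psi\in H(Y,\D)}d_{\D}\bigl(\psi(F(x)),\psi(F(y))\bigr)\le d_X(x,y),
\end{equation*}
the claimed inequality. The only point needing any verification is the stability of $H(\cdot,\D)$ under precomposition by $F$, and that is immediate from the holomorphy of $F$; there is therefore no genuine difficulty here.
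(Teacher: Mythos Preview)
Your argument is correct and is exactly the approach the paper has in mind: the paper does not give a written proof but simply remarks that the inequality follows from the definition of the M\"{o}bius distance, which is precisely your composition-and-supremum argument.
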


Moreover, $d_X$ enjoys the following product property:

\begin{lemma}[$\mbox{\cite[Theorem 4.9.1]{Kob}}$]\label{lemma:3}
Let $X\subset\C^d$ be a domain. Then, we have
\begin{equation*}
d_{X\times X}((x_1, x_2), (y_1, y_2))=\max\{d_X(x_1, y_1), d_X(x_2, y_2)\}
\end{equation*}
for all $(x_1, x_2), (y_1, y_2)\in X\times X$.
\end{lemma}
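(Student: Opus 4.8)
The plan is to establish the two inequalities separately; the lower bound is elementary, while the upper bound carries all the content. For the lower bound $d_{X\times X}((x_1,x_2),(y_1,y_2))\ge\max\{d_X(x_1,y_1),d_X(x_2,y_2)\}$, I would apply Lemma \ref{lemma:1} to the two coordinate projections $\pi_1,\pi_2:X\times X\to X$, which are holomorphic. Taking $F=\pi_i$ (with domain $X\times X$ and codomain $X$) yields $d_X(x_i,y_i)=d_X(\pi_i(x_1,x_2),\pi_i(y_1,y_2))\le d_{X\times X}((x_1,x_2),(y_1,y_2))$ for $i=1,2$, and taking the maximum over $i$ gives the lower bound.

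For the upper bound I would unwind the definition of $d_{X\times X}$ as a supremum, so that it suffices to show that every $\psi\in H(X\times X,\D)$ satisfies $d_{\D}(\psi(x_1,x_2),\psi(y_1,y_2))\le\max\{d_X(x_1,y_1),d_X(x_2,y_2)\}=:r$. Composing $\psi$ with an automorphism of $\D$, I may normalize $\psi(x_1,x_2)=0$, so the claim becomes $|\psi(y_1,y_2)|\le r$. The model case $X=\D$ is instructive and guides the general argument: there one normalizes the two points (by automorphisms of $\D^2$) to $(0,0)$ and $(t_1,t_2)$ with $t_i=d_{\D}(x_i,y_i)\ge 0$ and $t_2=r\ge t_1$, restricts $\psi$ to the analytic disk $\zeta\mapsto(\tfrac{t_1}{t_2}\zeta,\zeta)$ joining the two points, and applies the one-variable Schwarz lemma to the resulting self-map of $\D$ fixing $0$ to obtain $|\psi(t_1,t_2)|\le t_2=r$.

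The hard part is that this disk trick does not transfer verbatim to a general domain $X$: restricting $\psi$ to an arbitrary analytic disk through the two points only bounds $d_{X\times X}$ by the maximum of the distances coming from analytic disks (the Kobayashi distances) of the factors, and these can exceed the M\"obius distances $d_X$ that appear on the right-hand side. A naive slicing argument fails as well: writing $A=\psi(y_1,x_2)$ and using the slice functions $x\mapsto\psi(x,x_2)$ and $y\mapsto\psi(y_1,y)$ gives only $|A|\le d_X(x_1,y_1)$ and $d_{\D}(A,\psi(y_1,y_2))\le d_X(x_2,y_2)$, so the triangle inequality for $d_{\D}$ yields at best $|\psi(y_1,y_2)|\le\frac{d_X(x_1,y_1)+d_X(x_2,y_2)}{1+d_X(x_1,y_1)d_X(x_2,y_2)}$, which is in general strictly larger than $r$. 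Thus the essential step is to exploit the full joint holomorphy of $\psi$, rather than finitely many one-variable slices, in order to upgrade these estimates to the sharp bound $|\psi(y_1,y_2)|\le r$. This is precisely the content of the cited product theorem \cite[Theorem 4.9.1]{Kob}, and is where I expect all the difficulty to concentrate.
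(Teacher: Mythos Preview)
The paper does not supply its own proof of this lemma; it is quoted verbatim from \cite[Theorem~4.9.1]{Kob} and used as a black box. Your lower bound via the coordinate projections and Lemma~\ref{lemma:1} is correct and standard, and your diagnosis of the upper bound is accurate: the analytic-disk trick settles the model case $X=\D$, but for a general domain it only yields the Lempert/Kobayashi-type bound $\max\{\ell_X(x_1,y_1),\ell_X(x_2,y_2)\}$, which can strictly exceed $\max\{d_X(x_1,y_1),d_X(x_2,y_2)\}$; and the two-slice triangle-inequality argument gives only the weaker estimate $(r_1+r_2)/(1+r_1r_2)$. So you have correctly located where the substance of the cited theorem lies. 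Since the paper itself simply invokes \cite{Kob} for this fact, your proposal is entirely in line with the paper's treatment---indeed it goes a step further by isolating precisely which inequality carries the weight and why the naive reductions fail.
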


We remark that the Carath\'{e}odory distance on $X$, denoted by $c_X$, is given by 
\begin{equation*}
c_X(x,y):=\tanh^{-1}(d_X(x,y))\;\;\;\;(x,y\in X).
\end{equation*}

If an RKHS $H_k$ has the following two-point Nevanlinna--Pick property and enjoys $\M=H^{\infty}$ isometrically, then we can connect $d_k$ and $d_X$.



\begin{definition}\label{def:two-point Pick}
We say that an RKHS $H_k$ of holomorphic functions on a domain $X\subset \C^d$ has the  {\bf two-point Nevanlinna--Pick property} if whenever $x_1, x_2$ are points in $X$ and $w_1,w_2$ are complex numbers such that
\begin{equation*}
\begin{bmatrix}
    (1-|w_1|^2)k(x_1, x_1)&(1-w_1\overline{w_2})k(x_1, x_2)\\
    (1-w_2\overline{w_1})k(x_2, x_1)&(1-|w_2|^2)k(x_2, x_2)
\end{bmatrix}
\end{equation*}
is positive semidefinite, then there exists a $\phi\in\M$ such that $\phi(x_i)=w_i$ for $i=1,2$ and $\|\phi\|_{\M}\le 1$. 
\end{definition}

Here is the precise statement of the relation between $d_k$ and $d_X$. 

\begin{lemma}[\mbox{\cite[Lemma 5.2]{Koj}}]\label{lemma:2}
Let $H_k$ be an RKHS of holomorphic functions on a domain $X\subset\C^d$. If $H_k$ has the two-point Nevanlinna--Pick property and enjoys $\M=H^{\infty}(X)$ isometrically, then 
\begin{equation*}
d_X(x,y)=d_k(x,y)
\end{equation*}
 holds for every $x,y\in X$.
\end{lemma}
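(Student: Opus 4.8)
The plan is to prove the two inequalities $d_X(x,y)\le d_k(x,y)$ and $d_X(x,y)\ge d_k(x,y)$ separately. The first is an immediate consequence of the contraction property recalled above together with the isometric identification $\M=H^{\infty}(X)$. Indeed, any $\phi\in H(X,\D)$ satisfies $\|\phi\|_{\infty}\le 1$, so by hypothesis $\phi$ is a contractive multiplier, whence $d_{\D}(\phi(x),\phi(y))\le d_k(x,y)$. Taking the supremum over all such $\phi$ yields $d_X(x,y)\le d_k(x,y)$ straight from the definition of the M\"obius distance.

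For the reverse inequality I would exploit the two-point Nevanlinna--Pick property to manufacture an extremal competitor in the supremum defining $d_X$. The case $x=y$ is trivial, and when $x\ne y$ the irreducibility of $H_k$ guarantees $0<d_k(x,y)<1$: strict positivity comes from the linear independence of $k(\cdot,x)$ and $k(\cdot,y)$, and $d_k(x,y)<1$ from $k(x,y)\ne 0$. I would then set $w_1=0$ and $w_2=d_k(x,y)\in\D$ and verify that the Pick matrix in Definition \ref{def:two-point Pick} is positive semidefinite. The crucial observation is the elementary identity
\begin{equation*}
(1-|w_1|^2)(1-|w_2|^2)=|1-w_1\overline{w_2}|^2-|w_1-w_2|^2,
\end{equation*}
which, after dividing the determinant condition by $k(x,x)k(y,y)>0$, shows that positive semidefiniteness of the Pick matrix is exactly equivalent to $d_{\D}(w_1,w_2)^2\le d_k(x,y)^2$. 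With the above choice this holds with equality, so the matrix is positive semidefinite, and the two-point Nevanlinna--Pick property produces $\phi\in\M$ with $\phi(x)=0$, $\phi(y)=d_k(x,y)$, and $\|\phi\|_{\M}\le 1$.

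It then remains to observe that this $\phi$ is a legitimate member of $H(X,\D)$. Since $\phi(x)\ne\phi(y)$, the multiplier $\phi$ is non-constant; being contractive it satisfies $\|\phi\|_{\infty}\le 1$, so the maximum modulus principle forces $|\phi|<1$ throughout the domain $X$, i.e. $\phi\in H(X,\D)$. Consequently
\begin{equation*}
d_X(x,y)\ge d_{\D}(\phi(x),\phi(y))=d_{\D}(0,d_k(x,y))=d_k(x,y),
\end{equation*}
which completes the argument.

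I expect the main obstacle to be the reverse inequality, and within it two points deserve care. First, the computation reducing positivity of the Pick matrix to the hyperbolic-distance inequality via the displayed identity is the conceptual heart of the proof and must be carried out exactly. Second, one must not overlook that a contractive multiplier a priori only maps into the closed disk $\overline{\D}$; it is precisely the maximum modulus argument, applied to the non-constant $\phi$, that places $\phi$ inside $H(X,\D)$ so that it genuinely competes in the supremum defining $d_X$.
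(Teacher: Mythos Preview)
Your argument is correct. Note, however, that the present paper does not actually give a proof of this lemma: it is quoted verbatim from \cite[Lemma~5.2]{Koj}, so there is nothing in this paper to compare your proof against. That said, the two-step strategy you outline---using the contraction property of contractive multipliers together with $\M=H^{\infty}(X)$ isometrically for $d_X\le d_k$, and invoking the two-point Nevanlinna--Pick property with the extremal data $w_1=0$, $w_2=d_k(x,y)$ for $d_X\ge d_k$---is the standard and natural one, and your handling of the two delicate points (the determinant computation reducing positive semidefiniteness of the Pick matrix to $d_{\D}(w_1,w_2)\le d_k(x,y)$, and the use of the maximum modulus principle to pass from $\|\phi\|_{\infty}\le 1$ to $\phi\in H(X,\D)$) is accurate.
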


Of course, the Hardy space $H^2$ is a prototypical example of such an RKHS \cite{AMp}.  


\begin{remark}
\upshape Lemma \ref{lemma:2} still holds if $X$ is a reduced complex space \cite{Koj}. Here, reduced complex spaces are generalizations of complex manifolds. 
For simplicity, we assume that $X$ is a domain in $\C^d$. 
\end{remark}

\section{Main Theorem}

Let $H_k$ be an RKHS of holomorphic functions on a domain $X\subset\C^d$. 
Then $H_k\otimes H_k$ is an RKHS of holomorphic functions on $X\times X\subset\C^d\times \C^d$ and its reproducing kernel is given by 
\begin{equation*}
k^{\otimes2}((x_1, x_2), (y_1, y_2))=k(x_1, y_1)k(x_2, y_2)\;\;\;\;((x_1,x_2), (y_1,y_2)\in X\times X)
\end{equation*}
(see \cite[Section 5.5]{PR}). 
Moreover, we can prove that
\begin{align*}
    d_{k^{\otimes 2}}((x_1, x_2),(y_1, y_2))&=\sqrt{d_k(x_1, y_1)^2+d_k(x_2, y_2)^2-d_k(x_1, y_1)^2d_k(x_2, y_2)^2}\notag 
\end{align*}
(cf. \cite[Section 5.3]{ARSW2011}). 

The pseudo-distance $d_{k^{\otimes 2}}$ separates points of $X\times X$. To prove this,
we recall the assumption that $k(\cdot, x)$ and $k(\cdot, y)$ are linearly independent in $H_k$ for each distinct points $x, y\in X$. This is equivalent to the fact that the $2\times 2$ matrix
\begin{equation*}
\begin{bmatrix}
k(x,x)&k(x,y)\\
k(y,x)&k(y,y)
\end{bmatrix}
\end{equation*}
is strictly positive. Thus, the Schur product theorem implies that the $2\times 2$ matrix 
\begin{align*}
&\begin{bmatrix}
    k^{\otimes2}((x_1, x_2), (x_1, x_2))&k^{\otimes2}((x_1, x_2), (y_1, y_2))\\
    k^{\otimes2}((y_1, y_2), (x_1, x_2))&k^{\otimes2}((y_1, y_2), (y_1, y_2))
\end{bmatrix}\\
&=
\begin{bmatrix}
    k(x_1, x_1)k(x_2, x_2)&k(x_1, y_1)k(x_2, y_2)\\
    k(y_1, x_1)k(y_2, x_2)&k(y_1, y_1)k(y_2, y_2)
\end{bmatrix}
\end{align*}
is strictly positive for each distinct $(x_1, x_2), (y_1, y_2)\in X\times X$. This implies that $k^{\otimes 2}((\cdot, \cdot), (x_1, x_2))$ and $k^{\otimes 2}((\cdot, \cdot), (y_1, y_2))$ are linearly independent in $H_k\otimes H_k$. Therefore, $d_{k^{\otimes 2}}$ separates points of $X\times X$ by \cite[Lemma 9.9]{AMp}.

\begin{example}\label{example:3-1}
\upshape The reproducing kernel of the Hardy space $H^2$ is the Szeg\H{o} kernel
\begin{equation*}
k^S(z,w)=\frac{1}{1-\overline{w}z}\;\;\;\;(z,w\in\D).
\end{equation*}
Obviously, we have
\begin{equation*}
    d_{k^S}(z,w)=\left|\frac{z-w}{1-\overline{w}z}\right|\;(=d_{\D}(z,w))\;\;\;\;(z,w\in\D).
\end{equation*}
Therefore, in this case, we get
\begin{align*}
d_{(k^S)^{\otimes 2}}((z_1, z_2),(w_1, w_2))&=\rho((z_1, z_2),(w_1, w_2))\\
&=\sqrt{\left|\dfrac{z_1-w_1}{1-\overline{w_1}z_1}\right|^2+\left|\dfrac{z_2-w_2}{1-\overline{w_2}z_2}\right|^2
-\left|\dfrac{z_1-w_1}{1-\overline{w_1}z_1}\cdot \dfrac{z_2-w_2}{1-\overline{w_2}z_2}\right|^2}. 
\end{align*}
\end{example}

We can construct another RKHS from $H_k$. By the Schur product theorem, $k^n(x,y):=k(x,y)^n$ ($n\in \mathbb{N}$) is a reproducing kernel on $X$. Thus, $k^n$ defines an RKHS $H_{k^n}$ of holomorphic functions on $X$.

\begin{example}
\upshape The reproducing kernel of the Bergman space on $\D$ is 
\begin{equation*}
(k^S)^2(z,w)=\frac{1}{(1-\overline{w}z)^2}=(k^S)^{\otimes 2}((z,z), (w,w))\;\;\;\;(z,w\in \D).
\end{equation*}
Thus, we obtain
\begin{align*}
d_{(k^S)^2}(z,w)&=\sqrt{\left|\dfrac{z-w}{1-\overline{w}z}\right|^2+\left|\dfrac{z-w}{1-\overline{w}z}\right|^2
-\left|\dfrac{z-w}{1-\overline{w}z}\cdot \dfrac{z-w}{1-\overline{w}z}\right|^2}\\
&=\sqrt{2\left|\dfrac{z-w}{1-\overline{w}z}\right|^2-\left|\dfrac{z-w}{1-\overline{w}z}\right|^4}\;\;\;\;(z,w\in\D).
\end{align*}
We remark that the Bergman space does not have the two-point Nevanlinna--Pick property \cite[Example 5.17]{AMp}.
\end{example}

Here is a relation between $d_k$ and $d_{k^n}$. We will use this to prove our main theorem. This immediately follows from the definition of these distances.

\begin{lemma}\label{lemma:power}
Let $x, y, x', y'$ be points in $X$. Then, they satisfy
\begin{equation*}
d_k(x', y')\le d_k(x,y)
\end{equation*}
if and only if they satisfy
\begin{equation*}
d_{k^n}(x', y')\le d_{k^n}(x,y).
\end{equation*}
\end{lemma}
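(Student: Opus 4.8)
The plan is to reduce both inequalities to a comparison of a single normalized scalar and then invoke the monotonicity of the power map. First I would introduce the quantity
\begin{equation*}
t(x,y):=\frac{|k(x,y)|^2}{k(x,x)k(y,y)}\in(0,1],
\end{equation*}
which is well defined and lies in $(0,1]$ because $H_k$ is irreducible: $k(x,y)\ne 0$ for all $x,y$, and the $2\times 2$ Gram matrix with entries $k(x,x),k(x,y),k(y,x),k(y,y)$ is positive semidefinite, so $k(x,x)k(y,y)-|k(x,y)|^2\ge 0$. By the very definition of $d_k$, we then have $d_k(x,y)=\sqrt{1-t(x,y)}$.

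Next I would observe that passing to the power kernel $k^n$ amounts to raising $t$ to the $n$-th power. Indeed, since $k^n(x,y)=k(x,y)^n$,
\begin{equation*}
\frac{|k^n(x,y)|^2}{k^n(x,x)\,k^n(y,y)}=\left(\frac{|k(x,y)|^2}{k(x,x)k(y,y)}\right)^n=t(x,y)^n,
\end{equation*}
and therefore $d_{k^n}(x,y)=\sqrt{1-t(x,y)^n}$. Thus both pseudo-distances are explicit strictly decreasing functions of the common quantity $t$.

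Writing $s:=t(x',y')$ and $t:=t(x,y)$, the inequality $d_k(x',y')\le d_k(x,y)$ is equivalent to $\sqrt{1-s}\le\sqrt{1-t}$, hence to $s\ge t$; likewise $d_{k^n}(x',y')\le d_{k^n}(x,y)$ is equivalent to $s^n\ge t^n$. Since $s,t>0$ and the map $u\mapsto u^n$ is strictly increasing on $[0,\infty)$, we have $s\ge t\iff s^n\ge t^n$, which yields the desired equivalence.

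There is essentially no serious obstacle here; the content of the lemma is exactly that both distances are monotone reparametrizations of the same scalar $t(x,y)$. The only point demanding care is the bookkeeping of inequality directions: $d_k$ is a \emph{decreasing} function of $t$, so that $d_k(x',y')\le d_k(x,y)$ translates into $t(x',y')\ge t(x,y)$, and one must record that the order-preservation under $u\mapsto u^n$ relies on the arguments being nonnegative.
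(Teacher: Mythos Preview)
Your argument is correct and is precisely the unpacking of what the paper means when it says the lemma ``immediately follows from the definition of these distances'': both $d_k$ and $d_{k^n}$ are strictly decreasing functions of the same quantity $t(x,y)=|k(x,y)|^2/(k(x,x)k(y,y))$, and $u\mapsto u^n$ is order-preserving on $[0,\infty)$. There is nothing to add.
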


We also need the following lemma:

\begin{lemma}\label{lemma:SP}
Let $\zeta, \lambda, z, w$ be points in $\D$ such that
\begin{equation*}
    \max\{|\zeta|,|\lambda|\}\le \max\{|z|,|w|\}.
\end{equation*}
Then, these four points must satisfy
\begin{equation*}
    \sqrt{|\zeta|^2+|\lambda|^2-|\zeta\lambda|^2}\le \sqrt{2}\sqrt{|z|^2+|w|^2-|zw|^2}.
\end{equation*}
\end{lemma}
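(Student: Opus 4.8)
The plan is to reduce Lemma \ref{lemma:SP} to a one-variable optimization by normalizing the right-hand side and maximizing the left-hand side under the given constraint. The key observation is that the function $f(s,t) = s + t - st$ (where $s = |\zeta|^2$, $t = |\lambda|^2$, both in $[0,1]$) is increasing in each variable separately, since $\partial_s f = 1-t \ge 0$ and $\partial_t f = 1-s \ge 0$. Therefore, to maximize the left-hand side under the constraint $\max\{|\zeta|,|\lambda|\} \le M := \max\{|z|,|w|\}$, we should push both $|\zeta|$ and $|\lambda|$ up to their common ceiling $M$. This gives the bound $|\zeta|^2 + |\lambda|^2 - |\zeta\lambda|^2 \le M^2 + M^2 - M^4 = 2M^2 - M^4$.

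Next I would establish a matching lower bound for the right-hand side. Writing $a = |z|^2$, $b = |w|^2$ and noting $M^2 = \max\{a,b\}$, the goal becomes to show $2M^2 - M^4 \le 2(a + b - ab)$. Since $a + b - ab \ge \max\{a,b\}(2 - \max\{a,b\})/\,?$ is not immediately obvious, I would argue directly: without loss of generality say $a \ge b$, so $M^2 = a$. Then $2(a+b-ab) = 2a + 2b(1-a) \ge 2a \ge 2a - a^2 = 2M^2 - M^4$, where the first inequality uses $b(1-a) \ge 0$ (valid since $a \le 1$) and the second uses $a^2 \ge 0$. Combining this chain with the upper bound from the previous paragraph gives
\begin{equation*}
|\zeta|^2 + |\lambda|^2 - |\zeta\lambda|^2 \le 2M^2 - M^4 \le 2(a+b-ab) = 2(|z|^2 + |w|^2 - |zw|^2),
\end{equation*}
and taking square roots yields the claim.

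**The main obstacle** I anticipate is making sure the constraint is exploited correctly: the hypothesis only bounds $\max\{|\zeta|,|\lambda|\}$, not each of $|\zeta|,|\lambda|$ individually in a symmetric way, so one must be careful that bounding both by $M$ is legitimate. It is, precisely because $\max\{|\zeta|,|\lambda|\} \le M$ forces \emph{both} $|\zeta| \le M$ and $|\lambda| \le M$, and monotonicity of $f$ then lets us replace each argument by $M$ without decreasing the value beyond the bound. The factor of $\sqrt{2}$ is exactly what absorbs the slack in the inequality $2M^2 - M^4 \le 2(a+b-ab)$, and it is sharp in the regime $b \to 0$, $a \to 0$, which is a reassuring consistency check. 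The rest of the argument is elementary algebra on $[0,1]$, so no deeper machinery is needed.
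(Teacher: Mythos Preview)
Your proof is correct and follows essentially the same route as the paper's: both arguments squeeze the inequality through $M=\max\{|z|,|w|\}$, bounding the left-hand side above and the right-hand side below by expressions in $M$. The only cosmetic difference is that the paper uses the cruder pivot $|\zeta|^2+|\lambda|^2-|\zeta\lambda|^2 \le 2M^2$ (via $|\zeta|^2(1-|\lambda|^2)+|\lambda|^2 \le 2\max\{|\zeta|^2,|\lambda|^2\}$) and then $M^2 \le |z|^2+|w|^2-|zw|^2$, whereas you take the slightly sharper intermediate $2M^2-M^4$; both yield the same conclusion with the same structure.
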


\begin{proof}
Since $0\le 1-|\lambda|^2\le 1$, we have
\begin{align*}
    \sqrt{|\zeta|^2+|\lambda|^2-|\zeta\lambda|^2}&=\sqrt{|\zeta|^2(1-|\lambda|^2)+|\lambda|^2}\le \sqrt{2}\max\{|\zeta|,|\lambda|\}\\
    &\le \sqrt{2}\max\{|z|,|w|\}\;\;\;\;(\mbox{by the hypothesis}).
\end{align*}
It is obvious that  
\begin{equation*}
    \max\{|z|,|w|\}\le \sqrt{|z|^2+|w|^2-|zw|^2}
\end{equation*}
holds. Therefore, we obtain
\begin{equation*}
    \sqrt{|\zeta|^2+|\lambda|^2-|\zeta\lambda|^2}\le \sqrt{2}\sqrt{|z|^2+|w|^2-|zw|^2}.
\end{equation*}
\end{proof}

Here is our main theorem. This is a generalization of Theorem \ref{theorem:Seto} 

\begin{theorem}\label{theorem:1}
Let $H_k$ be an RKHS of holomorphic functions on a domain $X\subset\C^d$. Suppose $H_k$ has the two-point Nevanlinna--Pick property and enjoys $\M=H^{\infty}(X)$ isometrically. If $F=(f_1, f_2):X\times X\rightarrow X\times X$ is a holomorphic map on $X\times X$, then $F$ satisfies
\begin{equation*}
d_{(k^n)^{\otimes 2}}(F(x_1, x_2), F(y_1, y_2))\le\sqrt{2}d_{(k^n)^{\otimes 2}}((x_1, x_2),(y_1, y_2))
\end{equation*}
for all $(x_1, x_2)$, $(y_1, y_2)$ in $X\times X$ and $n\in\mathbb{N}$. 
\end{theorem}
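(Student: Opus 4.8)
The plan is to reduce the statement to the ingredients already assembled in this section: the Schwarz--Pick inequality for the M\"obius distance (Lemma~\ref{lemma:1}), the product property (Lemma~\ref{lemma:3}), the identification $d_X=d_k$ (Lemma~\ref{lemma:2}), and the two elementary comparison lemmas (Lemmas~\ref{lemma:power} and~\ref{lemma:SP}). The role of the constant $\sqrt 2$ will be isolated entirely inside Lemma~\ref{lemma:SP}; every step before it is either an equality or a genuine contraction.

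First I would write $F=(f_1,f_2)$ with $f_1,f_2\colon X\times X\to X$ holomorphic and apply Lemma~\ref{lemma:1} to $F$, obtaining
\[
d_{X\times X}(F(x_1,x_2),F(y_1,y_2))\le d_{X\times X}((x_1,x_2),(y_1,y_2)).
\]
Expanding both sides with the product property (Lemma~\ref{lemma:3}) and then replacing every occurrence of $d_X$ by $d_k$ via Lemma~\ref{lemma:2}, whose hypotheses are exactly those imposed in the theorem, this turns into
\[
\max\{d_k(f_1(x_1,x_2),f_1(y_1,y_2)),\,d_k(f_2(x_1,x_2),f_2(y_1,y_2))\}\le\max\{d_k(x_1,y_1),\,d_k(x_2,y_2)\}.
\]

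Next I would upgrade this maximum inequality from $d_k$ to $d_{k^n}$. The right-hand maximum is attained by one of the two pairs, say $(x_1,y_1)$; then $d_k(x_1,y_1)$ dominates each of the two left-hand terms separately, and two applications of Lemma~\ref{lemma:power} convert these pairwise dominations into the corresponding inequalities for $d_{k^n}$, whence
\[
\max\{d_{k^n}(f_1(x_1,x_2),f_1(y_1,y_2)),\,d_{k^n}(f_2(x_1,x_2),f_2(y_1,y_2))\}\le\max\{d_{k^n}(x_1,y_1),\,d_{k^n}(x_2,y_2)\}.
\]
Since irreducibility forces $k^n(x,y)=k(x,y)^n\ne 0$, each of these $d_{k^n}$-values lies in $[0,1)\subset\D$, so I may feed the four numbers into Lemma~\ref{lemma:SP} with $\zeta=d_{k^n}(f_1(x_1,x_2),f_1(y_1,y_2))$, $\lambda=d_{k^n}(f_2(x_1,x_2),f_2(y_1,y_2))$, $z=d_{k^n}(x_1,y_1)$ and $w=d_{k^n}(x_2,y_2)$. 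Rewriting both sides through the explicit formula for $d_{(k^n)^{\otimes 2}}$ recorded at the beginning of this section then gives exactly the claimed inequality.

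The main obstacle is the passage from $d_k$ to $d_{k^n}$ in the last paragraph: the tensor distance $d_{(k^n)^{\otimes 2}}$ is built from $d_{k^n}$, whereas the Schwarz--Pick and product machinery only controls $d_k=d_X$. Lemma~\ref{lemma:power} is designed for exactly this transfer, but it compares only two pairs of points at a time, so one cannot apply it blindly to the two maxima; the argument must route through whichever pair realizes the right-hand maximum, so that both left-hand terms are dominated by a single common pair. Once that monotonicity is in place, the factor $\sqrt 2$ costs nothing extra: it is simply the price of passing from the product (i.e.\ $\max$) distance to the tensor distance, which is precisely what Lemma~\ref{lemma:SP} quantifies.
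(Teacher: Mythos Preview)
Your proposal is correct and follows essentially the same route as the paper: Lemma~\ref{lemma:1} and Lemma~\ref{lemma:3} give the $\max$--inequality for $d_X$, Lemma~\ref{lemma:2} converts it to $d_k$, Lemma~\ref{lemma:power} upgrades it to $d_{k^n}$, and Lemma~\ref{lemma:SP} together with the tensor formula finishes. Your treatment of the passage from $d_k$ to $d_{k^n}$ (routing through the pair that realizes the right-hand maximum) is in fact more explicit than the paper's, which simply cites Lemma~\ref{lemma:power} for the $\max$--to--$\max$ step.
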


We remark that the constant $\sqrt{2}$ is the best possible one (see \cite[Remark 3.3]{DKS}). 

\begin{proof}
By Lemma \ref{lemma:1} and Lemma \ref{lemma:3}, we have
\begin{align*}
&\max\{d_X(f_1(x_1, x_2), f_1(y_1, y_2)), d_X(f_2(x_1, x_2), f_2(y_1, y_2))\}\\
&=d_{X\times X}(F(x_1, x_2), F(y_1, y_2))\;\;\;\;(\mbox{by Lemma \ref{lemma:3}})\\
&\le d_{X\times X}((x_1, x_2), (y_1, y_2))\;\;\;\;(\mbox{by Lemma \ref{lemma:1}})\\
&=\max\{d_X(x_1, y_1), d_X(x_2, y_2)\}\;\;\;\;(\mbox{by Lemma \ref{lemma:3}}).
\end{align*}
Thus, Lemma \ref{lemma:2} tells us that 
\begin{align*}
    &\max\{d_k(f_1(x_1, x_2), f_1(y_1, y_2)), d_k(f_2(x_1, x_2), f_2(y_1, y_2))\}\\
    &\le \max\{d_k(x_1, y_1), d_k(x_2, y_2)\},
\end{align*}
and hence, we get
\begin{align*}
    &\max\{d_{k^n}(f_1(x_1, x_2), f_1(y_1, y_2)), d_{k^n}(f_2(x_1, x_2), f_2(y_1, y_2))\}\\
    &\le \max\{d_{k^n}(x_1, y_1), d_{k^n}(x_2, y_2)\}.
\end{align*}
by Lemma \ref{lemma:power}.
Therefore, Lemma \ref{lemma:SP} implies that 
\begin{align*}
&d_{(k^n)^{\otimes 2}}(F(x_1, x_2), F(y_1, y_2))\\
&=\Bigl(d_{k^n}(f_1(x_1, x_2), f_1(y_1, y_2))^2+d_{k^n}(f_2(x_1, x_2), f_2(y_1, y_2))^2\\
    &\;\;\;\;\;\;\;\;-d_{k^n}(f_1(x_1, x_2), f_1(y_1, y_2))^2d_{k^n}(f_2(x_1, x_2), f_2(y_1, y_2))^2\Bigr)^{1/2}\\
&\le\sqrt{2}\Bigl(d_{k^n}(x_1, y_1)^2+d_{k^n}(x_2, y_2)^2-d_{k^n}(x_1, y_1)^2d_{k^n}(x_2, y_2)^2 \Bigr)^{1/2}\;\;\;(\mbox{by Lemma \ref{lemma:SP}})\\
&=\sqrt{2}d_{(k^n)^{\otimes 2}}((x_1, x_2), (y_1, y_2)).
\end{align*}
holds for all $(x_1, x_2), (y_1, y_2)\in X\times X$.
\end{proof}

\begin{cor}[$\mbox{\cite[Theorem 4.1]{Set}}$]
If $F:\D^2\rightarrow\D^2$ is a holomorphic map on $\D^2$, then $F$ satisfies
\begin{equation*}
\rho(F(z_1, z_2), F(w_1, w_2))\le \sqrt{2}\rho((z_1, z_2), (w_1, w_2))
\end{equation*}
for all $(z_1, z_2), (w_1, w_2)\in\D^2$.
\end{cor}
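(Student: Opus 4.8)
The plan is to obtain this corollary as an immediate specialization of Theorem \ref{theorem:1}. First I would set $X=\D$ and take $H_k=H^2$ to be the Hardy space on the disk, whose reproducing kernel is the Szeg\H{o} kernel $k^S(z,w)=1/(1-\overline{w}z)$ recorded in Example \ref{example:3-1}. I would also fix $n=1$, so that the distance $d_{(k^n)^{\otimes 2}}$ appearing in Theorem \ref{theorem:1} becomes $d_{(k^S)^{\otimes 2}}$.

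The one thing to verify is that $H^2$ meets the two hypotheses of Theorem \ref{theorem:1}: that it has the two-point Nevanlinna--Pick property of Definition \ref{def:two-point Pick}, and that $\M=\HH$ isometrically. Both are classical facts about the Hardy space. The full Nevanlinna--Pick property of $H^2$ (which in particular yields its two-node version, i.e.\ Definition \ref{def:two-point Pick}) together with the isometric identification $\mathrm{Mult}(H^2)=\HH$ are standard, as already noted after Lemma \ref{lemma:2} and in \cite{AMp}. With these in hand, Theorem \ref{theorem:1} applied to an arbitrary holomorphic map $F:\D^2\to\D^2$ gives
\begin{equation*}
d_{(k^S)^{\otimes 2}}(F(z_1,z_2),F(w_1,w_2))\le\sqrt{2}\,d_{(k^S)^{\otimes 2}}((z_1,z_2),(w_1,w_2)).
\end{equation*}
Finally I would invoke the identity computed in Example \ref{example:3-1}, namely that $d_{(k^S)^{\otimes 2}}$ coincides with $\rho$, to rewrite both sides of this inequality and recover exactly the asserted bound $\rho(F(z_1,z_2),F(w_1,w_2))\le\sqrt{2}\,\rho((z_1,z_2),(w_1,w_2))$.

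There is essentially no serious obstacle here, since the corollary is a direct instance of the main theorem once the RKHS is chosen to be $H^2$. The only substantive content beyond quoting Theorem \ref{theorem:1} is confirming that $H^2$ satisfies Definition \ref{def:two-point Pick}, and this follows from the classical Pick interpolation theorem, of which Definition \ref{def:two-point Pick} is precisely the two-node special case. Thus the entire weight of the argument has been shifted into Theorem \ref{theorem:1}, and the proof of the corollary reduces to a translation of notation via Example \ref{example:3-1}.
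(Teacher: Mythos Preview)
Your proposal is correct and matches the paper's own proof, which simply states that the corollary follows immediately from Example \ref{example:3-1} and Theorem \ref{theorem:1}. You have spelled out in detail the specialization $X=\D$, $H_k=H^2$, $n=1$, and the verification that $H^2$ satisfies the hypotheses of Theorem \ref{theorem:1}, exactly as the paper intends.
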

\begin{proof}
This immediately follows from Example \ref{example:3-1} and Theorem \ref{theorem:1}.
\end{proof}

\begin{remark}\label{remark:3-4}
\upshape In \cite[Example 3.1 and Remark 3.4]{DKS}, we found $\zeta$, $\lambda$, ${\bf z}$, ${\bf w}\in \D^2$ such that
\begin{equation}\label{equation:3-2}
\rho(\zeta, \lambda)\le \rho({\bf z},{\bf w})
\end{equation}
holds, but there never exists any holomorphic map $F:\D^2\rightarrow\D^2$ that satisfies $F({\bf z})=\zeta$, $F({\bf w})=\lambda$. On the other hand, we proved a suitable interpolation theorem for points in $\D^2$ that satisfy inequality (\ref{equation:3-2}) (see \cite[Theorem 1.2]{DKS}).
\end{remark}

\section*{Acknowledgment}
The author acknowledges Professor Michio Seto and Mr Deepak K. D. for fruitful discussions in our joint work \cite{DKS} (especially, concerning Remark \ref{remark:3-4}).
Moreover, Professor Seto suggested that the author study the Bergman space as well as the Hardy space in Theorem \ref{theorem:1}.
The author is grateful to Professor Yoshimichi Ueda for his comments on the draft of this paper. This work was supported by JSPS Research Fellowship for Young Scientists (KAKENHI Grant Number JP 23KJ1070).


\end{document}